\theoremstyle{plain} 
\newtheorem{thm}{Theorem}
\newtheorem{prop}[thm]{Proposition}
\theoremstyle{definition}
\newtheorem{defn}[thm]{Definition}
\newtheorem{ex}[thm]{Example}
\theoremstyle{remark}
\newtheorem{rem}[thm]{Remark}
\numberwithin{equation}{section}
\title{INTRODUCTION TO QUIVER VARIETIES \\
 --- FOR RING AND REPRESENTATION THEORISTS   
}
\author{Hiraku Nakajima} 
\address{
\begin{flushleft}
        \hspace{0.3cm}  Research Institute for Mathematical Sciences \\
         \hspace{0.3cm}  Kyoto University \\
         \hspace{0.3cm}  Kyoto, Kyoto 606-8502 JAPAN\\
\end{flushleft}
}
\email{nakajima@kurims.kyoto-u.ac.jp} 
\thanks{The paper is in a final form and no version of it will
be submitted for publication elsewhere.}
\newcommand{\vout}[1]{\operatorname{o}(#1)}
\newcommand{\vin}[1]{\operatorname{i}(#1)}
\newcommand{\barinv}{
  \setbox5=\hbox{A}\overline{\rule{0mm}{\ht5}\hspace*{\wd5}}\,}
\newcommand{\End}{\operatorname{End}}
\newcommand{\Hom}{\operatorname{Hom}}
\newcommand{\Ker}{\operatorname{Ker}}
\newcommand{\Ima}{\operatorname{Im}}
\newcommand{\GL}{\operatorname{GL}}
\newcommand{\ZZ}{{\mathbb Z}}
\newcommand{\CC}{{\mathbb C}}
\newcommand{\RR}{{\mathbb R}}
\newcommand{\tr}{\operatorname{tr}}
\newcommand{\dslash}{/\!\!/}
\newcommand{\fM}{{\mathfrak M}}
\newcommand{\fC}{{\mathfrak C}}
\newcommand{\La}{{\mathfrak L}}
\theoremstyle{remark}
\newtheorem{Exercise}[thm]{Exercise}
\begin{document}

\maketitle


\begin{abstract}


    We review the definition of quiver varieties and their relation to
    representation theory of Kac-Moody Lie algebras. Target readers
    are ring and representation theorists. We emphasize important
    roles of first extension groups of the preprojective algebra
    associated with a quiver.




\end{abstract}


\section{Introduction}

This is a review on quiver varieties written for the proceeding of
49th Symposium on Ring Theory and Representation Theory at Osaka
Prefecture University, 2016 Summer, based on my two lectures. 
Quiver varieties are spaces parametrising representations of
preprojective algebras associated with a quiver, hence they are
closely related to Ring Theory and Representation Theory. This is the
reason why I was invited to give lectures, even though my main
research interest is geometric representation theory.

The purpose of this review is to explain the definition of quiver
varieties and the main result in \cite{Nakajima:Na94,Nakajima:Na98},
which is 20 years old. Why do I write a review of such an old result~?
There exist several reviews of quiver varieties already. An earlier
review with the same target readers is \cite{Nakajima:proc}.
There are other reviews \cite{Nakajima:Schiffmann,Nakajima:Gi}, and
also a book \cite{Nakajima:Ki16}.

Besides shortest among existing reviews, this one has a special
feature: I put emphasis on the complex \eqref{eq:8}, which has been
used at various places in the theory of quiver varieties.
It is a familiar complex in representation theory, as it computes
homomorphisms and $\operatorname{Ext}^1$ between (framed)
representations of the preprojective algebra.

Importance of $\operatorname{Ext}^1$ is clear to ring and
representation theorists. A purpose of this review is to explain its
importance for quiver varieties.
Quiver varieties themselves could be loosely viewed as
\emph{nonlinear} analog of self-$\operatorname{Ext}^1$ as their
tangent spaces are nothing but $\operatorname{Ext}^1$ of modules with
themselves. (See at the end of \S\ref{subsec:2.1}.)
A particularly nice feature of \eqref{eq:8} for the case of tangent
spaces is the vanishing of the first and third cohomology groups.
This follows from the stability condition used in the definition of
quiver varieties.
It implies the smoothness of quiver varieties.

The complex \eqref{eq:8} where one of representation corresponds to a
simple representation $S_i$ for a vertex $i$ is also important. See
\S\ref{subsec:definition-kac-moody}. The stability condition implies
that the first cohomology group vanishes, hence the difference of
dimensions of the second and third cohomology groups is the Euler
characteristic of the complex. This simple observation plays an
important role.
The complex also appears in a definition of Kashiwara crystal
structure on the set of irreducible components of lagrangian
subvarieties in quiver varieties. See \S\ref{subsec:crystal}.

I will not list earlier references which I studied before writing
\cite{Nakajima:Na94,Nakajima:Na98}. So readers who pay attention on
history should read original papers. But two papers
\cite{Nakajima:Ringel,Nakajima:MR1035415} were so fundamental, let me
recall how I encountered them.
In 1989 Summer I introduced moduli spaces $\fM(V,W)$ of (framed)
representations of the preprojective algebra of an affine $ADE$ quiver
$Q = (Q_0,Q_1)$ with dimension vector $V$ and framing vector $W$ with
Kronheimer \cite{Nakajima:KN}. This construction had an origin in the
gauge theory.
Hence I thought that they are important spaces, and I was interested
in symplectic geometry, topology, etc, of $\fM(V,W)$ as a geometer.

In 1990 Summer I heard Lusztig's plenary talk at ICM Kyoto, explaing
his construction \cite{Nakajima:MR1035415} of the canonical base of
the upper triangular subalgebra $\mathbf U^-$ of the quantized
enveloping algebra $\mathbf U = \mathbf U_q(\mathfrak g)$, built on an
earlier result by Ringel \cite{Nakajima:Ringel}.

Since Ringel and Lusztig's constructions were based on fields which
were not familiar to me at that time, it took several years until I
realized that the direct sum of homology groups of $\fM(V,W)$ is a
representation of the Kac-Moody Lie algebra $\mathfrak g$ associated
with the quiver $Q$, as variants of their construction
\cite{Nakajima:Na94,Nakajima:Na98}. This makes sense for any quiver,
hence I named $\fM(V,W)$ \emph{quiver varieties}, and started to study
further structures of $\fM(V,W)$. Rather unexpectedly, quiver
varieties have lots of structures, and they are still actively studied
by various people even now.

\subsection*{Acknowledgment}

I thank the organizers of the symposium for invitation.

\section{Notation and basic definitions}

\subsection{Preprojective algeras and extension
  groups}\label{subsec:2.1}

Let $Q = (Q_0,Q_1)$ be a quiver, where $Q_0$ is the set of vertices,
and $Q_1$ is the set of oriented edges. We always assume $Q$ is
finite.
Let $\vout{h}$, $\vin{h}$ denote the outgoing and incoming vertices of
an edge $h$. For $h\in Q_1$, we consider an edge with opposite
orientation and denote it by $\overline{h}$, hence
$\vout{\overline{h}} = \vin{h}$, $\vin{\overline{h}} = \vout{h}$. We
add $\overline{Q}_1 = \{ \overline{h} \mid h\in Q_1\}$ to $Q_1$, and
consider the \emph{doubled quiver}
$Q^{\rm dbl} = (Q_0, Q_1\sqcup\overline{Q}_1)$. We denote
$Q_1\sqcup\overline{Q}_1$ by $Q^{\rm dbl}_1$.  We extend
$\barinv\colon Q_1\to\overline{Q}_1$ to $Q^{\rm dbl}_1$ so that
$\overline{\overline h} = h$.

Let $V = \bigoplus_{i\in Q_0} V_i$ be a finite dimensional complex
$Q_0$-graded vector space. Its dimension vector
$(\dim V_i)_{i\in Q_0} \in \ZZ_{\ge 0}^{Q_0}$ is denoted simply by
$\dim V$.
We introduce a vector space
\begin{equation*}
  \mathbf N(V) = \bigoplus_{h\in Q_1} \Hom(V_{\vout{h}}, V_{\vin{h}}), \qquad
  G_V = \prod_{i\in Q_0} \GL(V_i).
\end{equation*}
An element in $\mathbf N(V)$ is denoted by
$B = \bigoplus_{h\in Q_1} B_h$ or $(B_h)_{h\in Q_1}$, where $B_h$ is
the component in $\Hom(V_{\vout{h}}, V_{\vin{h}})$. Similarly an
element in $G_V$ is denoted by $g = \prod_{i\in Q_0} g_i = (g_i)_{i\in Q_0}$.

We have a set-theoretical bijection
\begin{equation*}
  \left\{\begin{minipage}{.7\linewidth}
    isomorphism classes of representations of $Q$ whose dimension vector
    is $\dim V$
  \end{minipage}\right\}
  \longleftrightarrow \mathbf N(V)/G_V.
\end{equation*}
Here $G_V$ acts on $\mathbf N(V)$ by conjugation. By abuse of
terminology a point $B\in\mathbf N(V)$ is often called a
representation of $Q$.

We consider the cotangent space to $\mathbf N(V)$:
\begin{equation*}
  \mathbf M(V) = \mathbf N(V)\oplus \mathbf N(V)^*
  = \bigoplus_{h\in Q_1^{\rm dbl}} \Hom(V_{\vout{h}}, V_{\vin{h}}).
\end{equation*}
The action of $G_V$ on $\mathbf M(V)$ is defined also by
conjugation. It preserves the symplectic form on $\mathbf M(V)$ given
by the natural pairing. We consider the associated \emph{moment map}
\begin{equation*}
  \mu\colon \mathbf M(V) \to \bigoplus_{i\in Q_0} \operatorname{End}(V_i);
  \bigoplus_{h\in Q^{\rm dbl}_1} B_h \mapsto \bigoplus_{i\in Q_0} \sum_{\substack{h\in Q^{\rm dbl}_1 \\ \vin{h}=i}}
    \varepsilon(h) B_h B_{\overline{h}},
\end{equation*}
where $\varepsilon(h) = 1$ if $h\in Q_1$, $-1$ if
$h\in\overline{Q}_1$.

Note that $\bigoplus_i \End(V_i)$ is the Lie algebra of the group $G_V$.

The moment map has its origin in symplectic geometry: the quotient
space $\mu^{-1}(0)/G_V$ is the cotangent bundle
$T^*(\mathbf N(V)/G_V)$ of $\mathbf N(V)/G_V$:
$(B_h)_{h\in\overline{Q}_1}$ is a cotangent vector, and the equation
$\mu=0$ means it vanishes to the tangent direction to $G_V$-orbits:
\begin{multline*}
  (B_{\overline h})_{\overline{h}\in\overline{Q}_1} \perp T\left( G_V\cdot (B_h)_{h\in Q_1}\right) \\
  \Longleftrightarrow
  \sum_{h\in Q_1} \tr (B_{\overline{h}} (\xi_{\vin{h}} B_h - B_h \xi_{\vout{h}})) = 0
  \quad \forall (\xi_i)_i \in \bigoplus_{i\in Q_0} \End(V_i) \\
  \Longleftrightarrow
  \sum_{h\in Q_1, \vin{h} = i} B_h B_{\overline{h}} -
  \sum_{h\in Q_1, \vout{h}=i} B_{\overline{h}} B_h =0.
\end{multline*}
But this must be understood with care, as $\mathbf N(V)/G_V$ is not a
manifold, nor even a Hausdorff space, in general. In the next section
we introduce a modification of the quotient $\mu^{-1}(0)/G_V$ (we also
add framing), which is a smooth algebraic variety. But it is
\emph{not} a cotangent bundle, nor a vector bundle over another
manifold. It is because a similar modification of $\mathbf N(V)/G_V$
is usually smaller or quite often $\emptyset$, and its cotangent
bundle is an open subset of $\mu^{-1}(0)/G_V$. Here the cotangent
bundle of an empty set is understood as an empty set.

Let us also note that $\mu=0$ is the defining relation of the
\emph{preprojective algebra} $\Pi(Q)$, introduced by
Gelfand-Ponomarev, and further studied by Dlab-Ringel. Again by abuse
of terminology, a point $B$ in $\mu^{-1}(0)$ is often called a 
representation of the preprojective algebra associated with
$Q^{\rm dbl}$ (or $Q$).

Let us explain another related interpretation of $\mu$. Let us take a
point $B\in\mathbf M(V)$ and consider
\begin{equation}\label{eq:3}
\begin{gathered}
  \bigoplus_{i\in Q_0} \End(V_i) \xrightarrow{\iota}
  \mathbf M(V) \xrightarrow{d\mu}
  \bigoplus_{i\in Q_0} \End(V_i),
  \\
  \iota(\xi) = (\xi_{\vin{h}} B_h - B_h \xi_{\vout{h}})_h,
  \quad
  d\mu_i(C) = \sum_{\substack{h\in Q_1^{\rm dbl} \\ \vin{h} = i}}
  \varepsilon(h) (B_h C_{\overline{h}} + C_h B_{\overline{h}}).
\end{gathered}
\end{equation}
The linear map $\iota$ is nothing but the differential of the
$G_V$-action given by conjugation, when we undertand
$\bigoplus \End(V_i)$ as the Lie algebra of $G_V$. On the other hand,
$d\mu$ is the differential of the moment map $\mu$. Note also that
this is a complex, i.e., $d\mu\circ\iota = 0$ if $\mu(B) = 0$.

Now we observe
\begin{equation}
  \label{eq:1}
  \begin{minipage}{.9\linewidth}
    $d\mu$ is the transpose of $\iota$ when we identify
    $\mathbf M(V)$ with its dual space via the symplectic form.
  \end{minipage}
\end{equation}
($\End(V_i)$ is self-dual by the trace pairing.)
Hence we have
\begin{equation}\label{eq:4}
  \Ker \iota \cong \left(\operatorname{Cok}d\mu\right)^\vee, \qquad
  \Ker d\mu \cong \left(\operatorname{Cok}\iota\right)^\vee.
\end{equation}

As we mentioned above, we will consider a modification of a quotient
space $\mu^{-1}(0)/G_V$ later, which is a smooth algebraic
variety. Let us omit the detail at this stage, and assume
$\mu^{-1}(0)/G_V$ is smooth so that the quotient map
$\mu^{-1}(0)\to \mu^{-1}(0)/G_V$ is a submersion. Then the tangent
space of $\mu^{-1}(0)/G_V$ at $[B]$ is given by
\begin{equation*}
    T_{[B]}(\mu^{-1}(0)/G_V) = \Ker d\mu/\Ima \iota.
\end{equation*}
Here $[B]$ denotes the point in $\mu^{-1}(0)/G_V$ given by
$B\in\mu^{-1}(0)$. From the observation \eqref{eq:4} above, the right
hand side has the induced symplectic form. Let us denote it
$\omega$. We consider $\omega$ as a differential form on the manifold
$\mu^{-1}(0)/G_V$. Let us check that $\omega$ is closed, i.e.,
$d\omega = 0$. In fact, it is enough to check that the pull-back of
$\omega$ to $\mu^{-1}(0)$ is closed as the quotient map is a
submersion. By the definition, the pull-back is nothing but the
restriction of the symplectic form on $\mathbf M(V)$. Then as $d$
commutes with the restriction, the closedness of the pull-back follows
from that of the symplectic form on $\mathbf M(V)$. But the latter is
trivial as $\mathbf M(V)$ is a vector space and its symplectic form is
constant.

Let us note that the complex \eqref{eq:3} can be modified to one
associated with a pair $B^1\in \mathbf M(V^1)$,
$B^2\in \mathbf M(V^2)$ where both satisfy $\mu=0$:
\begin{equation}\label{eq:5}
\begin{gathered}
  \bigoplus_{i\in Q_0} \Hom(V_i^1,V^2_i) \xrightarrow{\alpha}
  \bigoplus_{h\in Q_1^{\rm dbl}} \Hom(V^1_{\vout{h}}, V^2_{\vin{h}})
  \xrightarrow{\beta}
  \bigoplus_{i\in Q_0} \Hom(V_i^1,V^2_i)
  \\
  \alpha(\xi) = (\xi_{\vin{h}} B^1_h - B^2_h \xi_{\vout{h}})_h,
  \\
  \beta(C,D,E) = \sum_{\substack{h\in Q_1^{\rm dbl} \\ \vin{h} = i}}
  \varepsilon(h) (B^2_h C_{\overline{h}} + C_h B^1_{\overline{h}}).
\end{gathered}
\end{equation}

This complex is important in the representation theory of
preprojective algebras. Let us regard $B^1$, $B^2$ as modules of the
preprojective algebra $\Pi(Q)$. Then we have
\begin{equation*}
    \begin{gathered}
    \Ker\alpha \cong \Hom_{\Pi(Q)}(B^1,B^2), \qquad
    \operatorname{Coker}\beta \cong \Hom_{\Pi(Q)}(B^2,B^1)^\vee,\\
    \Ker\beta/\Ima\alpha \cong \operatorname{Ext}^1_{\Pi(Q)}(B^1,B^2).
    \end{gathered}
\end{equation*}
The first two isomorphisms are just by definition and the computation
of the transpose of $\beta$ as above. The last isomorphism is proved
in \cite{Nakajima:CB2}.

From this observation, the quotient space $\mu^{-1}(0)/G_V$ is a
\emph{nonlinear} version of the self-extension
$\operatorname{Ext}^1_{\Pi(Q)}(B,B)$, as a tangent space is linear
approximation of a manifold. This partly explains importance of study
of $\mu^{-1}(0)/G_V$, as $\operatorname{Ext}^1$ is a fundamental
object in representation theory. It is also \emph{deeper} than
$\operatorname{Ext}^1$, as the tangent space only reflects a local
structure of the manifold, and cannot see global structures, such as
topology of the manifold.

\subsection{Framed representations}

Now we take an additional $Q_0$-graded finite-dimensional complex
vector space $W = \bigoplus_{i\in Q_0} W_i$ and introduce
\begin{equation*}
  \mathbf M(V,W) = \bigoplus_{h\in Q_1^{\rm dbl}} \Hom(V_{\vout{h}}, V_{\vin{h}})
  \oplus \bigoplus_{i\in Q_0} \Hom(W_i, V_i)
  \oplus
  \Hom(V_i,W_i).
\end{equation*}
An element of the additional factor
$\bigoplus_{i\in Q_0} \Hom(W_i, V_i) \oplus \Hom(V_i,W_i)$ is called a
\emph{framing} of a quiver representation, and we denote it by
$I = (I_i)_{i\in Q_0}$, $J = (J_i)_{i\in Q_0}$. A point
$(B,I,J)\in\mathbf M(V,W)$ is called a \emph{framed representation}.

We have an action of $G_V$, and also of
$G_W = \prod_{i\in Q_0}\GL(W_i)$, on $\mathbf M(V,W)$ by
conjugation. It will be important for various applications of quiver
varieties, but we will \emph{not} use the latter action in this
review.

We have the moment map
\begin{gather*}
  \mu = (\mu_i)_i \colon\mathbf M(V,W)\to \bigoplus_i \mathfrak{gl}(V_i);
  \quad
  \mu_i(B, I, J) = \sum_{\substack{h\in Q_1^{\rm dbl}
         \\ \vin{h}=i}} \varepsilon(h) B_h B_{\overline{h}} + I_i J_i,
\end{gather*}
as above.

\begin{rem}\label{rem:CB}
  The framing factor naturally appeared in \cite{Nakajima:KN}, and it
  is also important in applications of quiver varieties to
  representation theory of Lie algebras, as $\dim W$ will be
  identified with a highest weight of a representation.
  However the author could not find earlier appearances in quiver
  representation literature, and he gave an explanation for the ring
  and representation theory community in \cite{Nakajima:proc}.

  On the other hand, Crawley-Boevey \cite{Nakajima:CB} found the
  following trick, which makes $\mathbf M(V,W)$ as the previous
  $\mathbf M(V')$ with a different quiver and a graded vector space
  $V'$ \cite{Nakajima:CB}: Add a vertex $\infty$ to $Q$, and draw
  edges from $\infty$ to $i\in Q_0$ as many as $\dim W_i$. We then
  defined $V'$ as $V$ plus one-dimensional vector space at the vertex
  $\infty$. We identify $\Hom(W_i, V_i)$ with
  $\Hom(\CC,V_i)^{\oplus \dim W_i}$ after taking a base of
  $W_i$. Hence we have $\mathbf M(V,W) = \mathbf M(V')$.
\end{rem}

\begin{rem}
  In \cite{Nakajima:Kr,Nakajima:KN} a deformation of the equation
  $\mu = 0$ as $\mu_i(B,I,J) = \zeta^\CC_i \operatorname{id}_{V_i}$ is
  considered. Here $\zeta^\CC = (\zeta^\CC_i)_i\in \CC^{Q_0}$. It
  motivated Crawley-Boevey and Holland \cite{Nakajima:CH} to study
  \emph{deformed preprojective algebras}. We restrict our interest
  only on the undeformed case $\zeta^\CC = 0$ in this review, though
  many results remain true for deformed case.
\end{rem}

Observation on symplectic forms for the case no framing $W$ remains
valid in the case with $W$, as $\mathbf M(V,W)$ is still a symplectic
vector space. In particular, we have an induced symplectic form on the
quotient $\mu^{-1}(0)/G_V$.

Let us write down a framed analog of \eqref{eq:5} for a pair
$(B^1,I^1,J^1)\in \mathbf M(V^1,W^1)$,
$(B^2,I^2,J^2)\in \mathbf M(V^2,W^2)$ where both satisfy $\mu=0$:
\begin{equation}\label{eq:8}
\begin{gathered}
  \bigoplus_{i\in Q_0} \Hom(V_i^1,V^2_i) \xrightarrow{\alpha}
  \begin{gathered}[m]
  \bigoplus_{h\in Q_1^{\rm dbl}} \Hom(V^1_{\vout{h}}, V^2_{\vin{h}})
  \\
  \oplus 
  \\
  \bigoplus_{i\in Q_0} \Hom(W^1_i, V^2_i)
  \oplus
  \Hom(V^1_i,W^2_i)
  \end{gathered}
  \xrightarrow{\beta}
  \bigoplus_{i\in Q_0} \Hom(V_i^1,V^2_i)
  \\
  \alpha(\xi) = (\xi_{\vin{h}} B^1_h - B^2_h \xi_{\vout{h}})_h
  \oplus (\xi_i I^1_i)_i \oplus (-J^2_i\xi_i)_i,
  \\
  \beta(C,D,E) = \sum_{\substack{h\in Q_1^{\rm dbl} \\ \vin{h} = i}}
  \varepsilon(h) (B^2_h C_{\overline{h}} + C_h B^1_{\overline{h}})
  + I^2_i E_i + D_i J^1_i.
\end{gathered}
\end{equation}
This complex appears at various points in study of quiver varieties,
such as
\begin{enumerate}
      \item the construction of instantons on an ALE space
    \cite[(4.3)]{Nakajima:KN},
      \item the tautological homomorphism in the definition of
    Kashiwara crystal structure on the set of irreducible components
    of lagrangian subvarieties \cite[\S4]{Nakajima:Na98},
      \item the definition of the Hecke correspondence
    \cite[\S5]{Nakajima:Na98},
      \item the decomposition of the diagonal
    \cite[\S6]{Nakajima:Na98},
      \item the definition of tensor product varieties
    \cite[\S3]{Nakajima:Na01}.
\end{enumerate}

\section{GIT quotients}

Since the group $G_V$ is noncompact, the quotient topology on
$\mu^{-1}(0)/G_V$ is \emph{not} Hausdorff in general. The trouble is
caused by nonclosed $G_V$-orbits: If orbits $O_1$, $O_2$ intersect in
their closure $\overline{O_1}\cap\overline{O_2}$, the corresponding
points in $\mu^{-1}(0)/G_V$ cannot be separated by disjoint open
neighborhoods.

\subsection{Affine quotients}
One solution to this problem is to introduce a coarser equivalence
relation
\begin{equation*}
  x \sim y \Longleftrightarrow
  \overline{G_V x}\cap \overline{G_V y}\neq \emptyset.
\end{equation*}
Then the quotient space $\mu^{-1}(0)/\!\!\sim$ is a Hausdorff
space. Let us denote this space by $\mu^{-1}(0)\dslash G_V$. It is
known that it has a structure of an affine algebraic scheme, in fact
we have
\begin{equation*}
   \mu^{-1}(0)\dslash G_V = \operatorname{Spec} \CC[\mu^{-1}(0)]^{G_V},
\end{equation*}
where $\CC[\mu^{-1}(0)]$ is the coordinate ring of the affine scheme
$\mu^{-1}(0)$, and $\CC[\mu^{-1}(0)]^{G_V}$ is its $G_V$-invariant
part. It is a fundamental theorem (due to Nagata) in geometric
invariant theory that the invariant ring is finitely generated. The
space $\mu^{-1}(0)\dslash G_V$ is called the \emph{affine
  algebro-geometric quotient} of $\mu^{-1}(0)$ by $G_V$. Let us denote
it by $\fM_0(V,W)$.

The ring of invariants is generated by two types of functions: (1)
Take an oriented cycle in the doubled quiver $Q^{\rm dbl}$ and
consider the trace of the composition of corresponding linear
maps. (2) Take a path starting from $i$ to $j$ and consider the
composition of $I_i$, linear maps for edges in the path, and $J_j$, a
linear map $W_i\to W_j$. Then its entry is an invariant function. This
follows from \cite{Nakajima:MR958897} after Crawley-Boevey's trick.

When $W=0$, $\fM_0(V,0)$ parametrizes semisimple representation of the
preprojective algebra $\Pi(Q)$. Roughly it is proved as
follows. Suppose a representation $B$ has a subrepresentation $B'$. We
have a short exact sequence $0\to B'\to B \to B/B'\to 0$. By the
action of `triangular' elements in $G_V$, we can send the off-diagonal
entries to $0$, in other words, $B$ can be degenerated to the direct
sum $B'\oplus (B/B')$. But $\fM_0(V,0)$ parametrizes closed orbits,
hence $B\cong B'\oplus (B/B')$. It means that we can take
complementary subrepresentation of $B'$. We continue this process
until it becomes a direct sum of simple representations.

The same is true even for $W\neq 0$ if we understand \emph{semisimple}
representations appropriately.

We can consider similar quotients of $\mathbf M(V)$ or $\mathbf
N(V)$. But they are often simple spaces:

\begin{ex}
    (1) Consider a quiver $Q$ without oriented cycles (e.g., a finite
    $ADE$ quiver) and define the affine algebro-geometric quotient
    $\mathbf N(V)\dslash G_V$ as above. Since we do not have oriented
    cycles, there is no invariant function. Hence
    $\mathbf N(V)\dslash G_V$ consists of a single point $\{ 0\}$.

    (2) Let $V$ be an $n$-dimensional complex vector space. Consider a
    $\GL(V)$-action on $\End(V)$ given by conjugation. Then
    $\End(V)\dslash \GL(V)$ is identified with $\CC^n/\mathfrak S_n$,
    the space of eigenvalues up to permutation. Here $\mathfrak S_n$
    is the symmetric group of $n$ letters.
\end{ex}

With a little more effort, one show
\begin{Exercise}
  (1) Consider an $ADE$ quiver $Q$ and define the affine
  algebro-geometric quotient $\fM_0(V,0) = \mu^{-1}(0)\dslash G_V$ for
  $W=0$. Show that it consists of a single point $\{ 0\}$. (This can
  be deduced from Lusztig's result saying that $B\in\mu^{-1}(0)$ is
  always nilpotent for an $ADE$ quiver. Alternative proof is given in
  \cite[Prop.~6.7]{Nakajima:Na94}.)

  (2) Consider the Jordan quiver with an $n$-dimensional vector space
  $V$ and $W=0$. Then $\fM_0(V,0) = \mu^{-1}(0)\dslash \GL(V)$ is
  $(\CC^2)^n/\mathfrak S_n$, the space of pairs of eigenvalues of
  $B_1$, $B_2$ up to permutation.
\end{Exercise}

On the other hand $\fM_0(V,W)$ (in general) and $\fM_0(V,0)$ for non
$ADE$ quiver are quite often complicated spaces.

\begin{ex}\label{ex:A1}
  Consider the $A_1$ quiver with vector spaces $V$, $W$. Then
  \(
     \mathbf M(V,W) = \Hom(W,V)\oplus\Hom(V,W)
  \)
  with $\mu(I,J) = IJ$. We consider $A = JI\in\End(W)$. It is
  invariant under $\GL(V)$ and its entries are $\GL(V)$-invariant
  functions on $\mathbf M(V,W)$. A fundamental theorem of the
  invariant theory says that they generate the ring of invariants. It
  satisfies $A^2 = JIJI = 0$ if $\mu(I,J) = 0$. A little more effort shows
  \begin{equation*}
    \fM_0(V,W) = \{ A\in\End(W) \mid A^2 = 0, \operatorname{rank} A\le \dim V
    \}.
  \end{equation*}
\end{ex}
Note that $\mathbf N(V,W)\dslash \GL(V)$ is $\{0\}$ in this example.

\subsection{GIT quotients}
Another way to construct a nice quotient space is to take a
$G_V$-invariant open subset $U$ of $\mu^{-1}(0)$ so that arbitrary
$G_V$-orbit in $U$ is closed (in $U$). Such an open subset $U$ arises
in geometric invariant theory. Since it is not our intension to
explain detailed structures of the quotient as an algebraic variety,
let us directly goes to a definition of the open subset $U$. In fact,
it depends on a choice, the stability parameter
$\zeta^\RR = (\zeta^\RR_i)\in \RR^{Q_0}$.

We consider $\zeta^\RR$ as a function $\ZZ^{Q_0}\to \RR$ by
$\zeta^\RR((v_i)_i) = \sum \zeta^\RR_i v_i$.

\begin{defn}
We say $(B,I,J)\in\mathbf M(V,W)$ is \emph{$\zeta^\RR$-semistable} if
the following two conditions are satisfied:
\begin{enumerate}
\item If a $Q_0$-graded subspace $S = \bigoplus S_i$ in $V$ is
  contained in $\Ker J$ and $B$-invariant, then
  $\zeta^\RR({\dim} S)\le 0$.
\item If a $Q_0$-graded subspace $T = \bigoplus T_i$ in $V$ contains
  $\Ima I$ and $B$-invariant, then
  $\zeta^\RR(\dim T)\le \zeta^\RR(\dim V)$.
\end{enumerate}
We say $(B,I,J)$ is \emph{$\zeta^\RR$-stable} if the strict
inequalities hold in (1),(2) unless $S=0$, $T=V$ respectively.
\end{defn}

\begin{rem}
  In view of Remark~\ref{rem:CB}, we can express the condition in
  terms of $\mathbf M(V')$: we set
  $\zeta^\RR_\infty = -\zeta^\RR(\dim V)$. Then $\zeta^\RR$ is
  extended to a function $\ZZ^{Q_0\sqcup\{\infty\}}\to\RR$. Then the
  condition is equivalent to $\zeta^\RR(\dim S')\le 0$ for a graded
  invariant subspace $S'\subset V'$. According to either
  $S'_\infty = 0$ or $\CC$, we have the above two cases (1), (2)
  respectively. Note that $S'$ being invariant means $S'$ is a
  submodule. Hence this reformulation coincides with the standard
  King's stability condition \cite{Nakajima:King}.
\end{rem}

Let us give a simple consequence of the $\zeta^\RR$-stability
condition. It basically says a $\zeta^\RR$-stable framed
representation is Schur:
\begin{prop}
  Suppose $(B,I,J)$ is $\zeta^\RR$-stable. Then the kernel of $\iota$
  and cokernel of $d\mu$ in q \textup(the framed version of\textup)
  \eqref{eq:3} are trivial.
\end{prop}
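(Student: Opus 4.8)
The plan is to reduce the two vanishing statements to a single one by duality, and then to prove that one by a short dimension count against the two stability inequalities. Since $d\mu$ is the transpose of $\iota$ with respect to the symplectic form (the framed analogue of \eqref{eq:1}, using that each $\End(V_i)$ is self-dual via the trace pairing and that the symplectic form pairs the $I$-slot with the $J$-slot), the framed analogue of \eqref{eq:4} gives $\operatorname{Cok} d\mu \cong (\Ker\iota)^\vee$. Hence it suffices to prove $\Ker\iota = 0$, and $\operatorname{Cok} d\mu = 0$ follows formally.

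First I would unwind what $\iota(\xi)=0$ means for $\xi=(\xi_i)_i\in\bigoplus_{i\in Q_0}\End(V_i)$ in the framed version of \eqref{eq:3}. Its three components vanish exactly when: (i) $\xi_{\vin{h}}B_h = B_h\xi_{\vout{h}}$ for all $h\in Q_1^{\rm dbl}$, i.e.\ $\xi$ is an endomorphism of the underlying $Q^{\rm dbl}$-representation $B$; (ii) $\xi_i I_i = 0$, i.e.\ $\Ima I\subseteq\Ker\xi$; and (iii) $J_i\xi_i = 0$, i.e.\ $\Ima\xi\subseteq\Ker J$.

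The core step is to feed the image and kernel of such a $\xi$ into the two stability conditions. Put $S=\Ima\xi$ and $T=\Ker\xi$. By (i) these are $B$-invariant $Q_0$-graded subspaces of $V$, since $\xi$ commutes with every $B_h$. By (iii) we have $S\subseteq\Ker J$, so stability (1) gives $\zeta^\RR(\dim S)\le 0$; by (ii) we have $\Ima I\subseteq T$, so stability (2) gives $\zeta^\RR(\dim T)\le\zeta^\RR(\dim V)$. Applying rank-nullity to each $\xi_i$ yields $\dim S+\dim T=\dim V$ as dimension vectors, whence $\zeta^\RR(\dim S)+\zeta^\RR(\dim T)=\zeta^\RR(\dim V)$ by linearity of $\zeta^\RR$. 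Therefore $\zeta^\RR(\dim S)=\zeta^\RR(\dim V)-\zeta^\RR(\dim T)\ge 0$, which combined with $\zeta^\RR(\dim S)\le 0$ forces $\zeta^\RR(\dim S)=0$. The strict part of stability (1) then rules this out unless $S=0$, i.e.\ $\Ima\xi=0$, i.e.\ $\xi=0$. This gives $\Ker\iota=0$, and the duality above finishes the proposition.

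The only point requiring care—mild, but the real content—is the bookkeeping that makes $S$ and $T$ legitimate test subspaces: one must use (i) for $B$-invariance and (iii), (ii) for the containments $S\subseteq\Ker J$ and $\Ima I\subseteq T$, so that conditions (1) and (2) genuinely apply. After that, it is essential that stability is \emph{strict}, so that $\zeta^\RR(\dim S)=0$ truly forces $S=0$ rather than being merely consistent with $\zeta^\RR(\dim S)\le 0$; this is exactly where $\zeta^\RR$-stability (as opposed to semistability) is used. Everything else is the vertexwise identity $\dim\Ima\xi+\dim\Ker\xi=\dim V$.
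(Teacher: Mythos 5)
Your proposal is correct and takes essentially the same route as the paper: reduce to $\Ker\iota = 0$ via the transpose relation \eqref{eq:1}, test $S=\Ima\xi$ and $T=\Ker\xi$ against the two stability conditions, and combine the vertexwise rank--nullity identity with strictness of $\zeta^\RR$-stability to force $\xi = 0$. The only cosmetic difference is that you invoke strictness only for $S$ (which already suffices, since $\Ima\xi=0$ gives $\xi=0$), whereas the paper notes both equalities; this changes nothing of substance.
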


\begin{proof}
  By \eqref{eq:1} it is enough to check the assertion for
  $\iota$. Suppose $\xi = (\xi_i)_i$ is in $\Ker\iota$. Then $\Ima\xi$
  is $B$-invariant and contained in $\Ker J$. Therefore
  $\zeta^\RR(\dim \Ima\xi) \le 0$ by the $\zeta^\RR$-semistability
  condition. Similarly $\Ker\xi$ is $B$-invariant and contains
  $\Ima I$. Therefore $\zeta^\RR(\dim \Ker\xi) \le \zeta^\RR(\dim
  V)$. But
  $\zeta^\RR(\dim \Ima\xi) + \zeta^\RR(\dim\Ker\xi) = \zeta^\RR(\dim
  V)$. Therefore two inequalities must be equalities. The
  $\zeta^\RR$-stability condition says $\Ima \xi = 0$ and $\Ker\xi =
  V$. These are nothing but $\xi = 0$.
\end{proof}

This also implies that the stabilizer of a stable point $(B,I,J)$ in
$G_V$ is trivial: If $g = (g_i)_i$ stablizes $(B,I,J)$,
$(g_i - \operatorname{id}_{V_i})_i$ is in the kernel of $\iota$,
hence must be trivial by the proposition.

The $G_V$-orbit through $(B,I,J)$ is of the form
$G_V/\operatorname{Stabilizer}$. Hence all $\zeta^\RR$-stable orbits
have the maximal dimension, equal to $\dim G_V$. Since an orbit $O_1$
appeared in the closure of an orbit $O_2$ has $\dim O_1 < \dim O_2$,
we conclude that $\zeta^\RR$-orbits are closed in the open subset of all
$\zeta^\RR$-stable points in $\mathbf M(V,W)$.

Let $\mu^{-1}(0)_{\zeta^\RR}^{\mathrm{s}}$ be the subset of
$\zeta^\RR$-stable points in $\mu^{-1}(0)$.

\begin{thm}
    \textup{(1)} $\mu^{-1}(0)_{\zeta^\RR}^{\mathrm{s}}$ is a complex
    manifold \textup(i.e., nonsingular\textup) whose dimension is
    $\dim\mathbf M(V,W) - \dim G_V$.

\textup{(2)} The quotient $\mu^{-1}(0)_{\zeta^\RR}^{\mathrm{s}}/G_V$ is a complex manifold whose dimension is $\dim\mathbf M(V,W) - 2\dim G_V$.
\end{thm}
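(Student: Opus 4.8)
The plan is to extract both statements from the previous Proposition, which translates $\zeta^\RR$-stability into the vanishing of $\Ker\iota$ and $\operatorname{Cok}d\mu$ in the framed version of \eqref{eq:3}.

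For (1), I would first note that $\zeta^\RR$-stability is an open condition, so the set $\mathbf M(V,W)_{\zeta^\RR}^{\mathrm s}$ of stable points is a $G_V$-invariant open subset of $\mathbf M(V,W)$. The essential input is that $d\mu$ is surjective at every stable point: the Proposition gives $\operatorname{Cok}d\mu = 0$ there. Consequently $0$ is a regular value of the restriction $\mu\colon\mathbf M(V,W)_{\zeta^\RR}^{\mathrm s}\to\bigoplus_i\End(V_i)$, and the holomorphic regular value theorem exhibits $\mu^{-1}(0)_{\zeta^\RR}^{\mathrm s}$ as a complex submanifold of codimension $\dim\bigoplus_i\End(V_i)$. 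Since $\bigoplus_i\End(V_i)$ is the Lie algebra of $G_V$, this codimension equals $\dim G_V$, so the dimension is $\dim\mathbf M(V,W) - \dim G_V$.

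For (2), I would assemble three facts already available. First, as noted immediately after the Proposition, the stabilizer of a stable point is trivial, so $G_V$ acts freely on $\mu^{-1}(0)_{\zeta^\RR}^{\mathrm s}$. Second, as observed just before the theorem, every stable orbit is closed in the stable locus and has the maximal dimension $\dim G_V$. Third, the tangent space to the orbit space at $[B,I,J]$ should be $\Ker d\mu/\Ima\iota$; since $\iota$ is injective and $d\mu$ surjective (both by the Proposition) its dimension is $(\dim\mathbf M(V,W) - \dim G_V) - \dim G_V = \dim\mathbf M(V,W) - 2\dim G_V$, which is the claimed dimension and equals $\dim\mu^{-1}(0)_{\zeta^\RR}^{\mathrm s} - \dim G_V$, as it must for a free quotient.

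The genuinely nontrivial step, and the one I expect to be the main obstacle, is upgrading this infinitesimal and set-theoretic picture to an honest complex manifold structure on the orbit space. Freeness yields trivial stabilizers, but because $G_V$ is a noncompact reductive group the quotient manifold theorem for compact groups does not apply directly; what is really required is that the $G_V$-action on the stable locus be \emph{proper}, guaranteeing that $\mu^{-1}(0)_{\zeta^\RR}^{\mathrm s}/G_V$ is Hausdorff and that the quotient map is a holomorphic principal $G_V$-bundle. This is precisely where geometric invariant theory enters: the closedness of stable orbits together with Luna's slice theorem produces local holomorphic slices transverse to the orbits on which $G_V$ acts with trivial stabilizer, and these slices serve as charts realizing the quotient as a smooth complex variety. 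Once properness (equivalently, the slice construction) is in hand, the smoothness of $\mu^{-1}(0)_{\zeta^\RR}^{\mathrm s}$ and all the dimension counts follow formally from the Proposition.
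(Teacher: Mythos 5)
Your proposal takes essentially the same route as the paper: part (1) is exactly the paper's argument (the Proposition gives surjectivity of $d\mu$ on the stable locus, then the inverse function theorem applies), and part (2) rests on precisely the two facts the paper cites, namely that the $G_V$-action on $\mu^{-1}(0)_{\zeta^\RR}^{\mathrm{s}}$ is free and has closed orbits. The paper states part (2) as a consequence of free-and-closed without elaboration, so your identification of properness and Luna-type slices as the technical mechanism is a correct filling-in of the step the paper leaves implicit, not a different approach.
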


The first assertion is a simple consequence of the inverse function
theorem, as $d\mu$ is surjective over
$\mu^{-1}(0)_{\zeta^\RR}^{\mathrm{s}}$.
The second assertion is a little more difficult to prove, but
it is a consequence of the assertion that the $G_V$-action on
$\mu^{-1}(0)_{\zeta^\RR}^{\mathrm{s}}$ is free and closed. For our
applications, this will be important as Poincar\'e duality isomorphism
holds for $\mu^{-1}(0)_{\zeta^\RR}^{\mathrm{s}}/G_V$.

It is known that the $\zeta^\RR$-semistability automatically implies
the $\zeta^\RR$-stability unless $\zeta^\RR$ lies in a finite union of
hyperplanes in $\RR^{Q_0}$. In this case, it is known that the natural
map
\begin{equation}\label{eq:pi}
  \pi\colon \mu^{-1}(0)_{\zeta^\RR}^{\mathrm{s}}/G_V \to \mu^{-1}(0)\dslash G_V
\end{equation}
is proper, i.e., inverse images of compact subsets remain
compact. Here the map is defined by assigning $\sim$-equivalence
classes to $\zeta^\RR$-stable $G_V$-orbits. If $(B,I,J)$ is regarded
as a framed representation of the preprojective algebra, it is sent to
its `semisimplification' under $\pi$.

When $\zeta^\RR$ lies in a finite union of hyperplanes, $\pi$ is not
proper. We need to replace $\mu^{-1}(0)_{\zeta^\RR}^{\mathrm{s}}/G_V$
by a larger space, a certain quotient of the space of
$\zeta^\RR$-semistable points in $\mu^{-1}(0)$, similar to
$\mu^{-1}(0)\dslash G_V$.
We do not consider such $\zeta^\RR$. We always assume
$\zeta^\RR$-stability and $\zeta^\RR$-semistability are equivalent
hereafter.

Let us denote $\mu^{-1}(0)_{\zeta^\RR}^{\mathrm{s}}/G_V$ by
$\fM_{\zeta^\RR}(V,W)$. We will use the case $\zeta^\RR_i > 0$ for all
$i\in Q_0$ later. In this case we simply denote it by $\fM(V,W)$. The
inverse image $\pi^{-1}(0)$ will be important. Let us denote it by
$\La(V,W)$.

For a $\zeta^\RR$-stable framed representation $(B,I,J)$, the
corresponding point in $\fM_{\zeta^\RR}(V,W)$ is denoted by $[B,I,J]$.

\begin{ex}\label{ex:A1st}
  Consider the $A_1$ quiver and vector spaces $V$, $W$ as in
  Example~\ref{ex:A1}. When the stability parameter $\zeta^\RR > 0$
  (resp.\ $\zeta^\RR < 0$), the $\zeta^\RR$-semistablity means that
  $J$ is injective (resp.\ $I$ is surjective). Note also that
  $\zeta^\RR$-semistability and $\zeta^\RR$-stability are
  equivalent. Suppose $\zeta^\RR > 0$ for brevity. Then $\Ima J$ is a
  subspace of $W$ with dimension $\dim V$. Hence we have a map
  $\fM(V,W)\to \operatorname{Gr}(V,W)$, the Grassmannian variety of
  subspaces in $W$ with dimension $\dim V$. In particular, we have
  $\fM(V,W) = \emptyset$ unless $0\le \dim V \le \dim W$.

  Consider $A = JI$ as in Example~\ref{ex:A1}. We have
  $\Ima A\subset\Ima J$ and $\Ima A \subset \Ker A$, hence
  $A\in\Hom(W/\Ima J, \Ima J)$. Moreover it is simple to check that
  $A$ together with $\Ima J$ conversely determines $(I,J)$ up to
  $\GL(V)$-action. This shows that
  $\fM(V,W)\cong T^*\operatorname{Gr}(V,W)$, the cotangent bundle of
  $\operatorname{Gr}(V,W)$.

  The map $\pi$ in \eqref{eq:pi} is given by $(\Ima J,A)\mapsto
  A$. Comparing with Example~\ref{ex:A1}, one see that $\pi$ is
  surjective when $\dim V \le \dim W/2$. In fact, it is known that it
  is a resolution of singularities. On the other hand, the image is a
  proper subset if $\dim V > \dim W/2$ as
  $\operatorname{rank} A \le \dim W - \dim V < \dim W/2$. Note also that
  $\La(V,W) = \operatorname{Gr}(V,W)$.
\end{ex}

In this example $\fM(V,W)$ is a cotangent bundle of
$\operatorname{Gr}(V,W)$ which is the quotient of
$\zeta^\RR$-semistable points in $\mathbf N(V,W)^* = \Hom(V,W)$ by
$\GL(V)$. But it is not the case as the following example illustrate:

\begin{ex}\label{ex:An}
    Consider the quiver of type $A_n$ with $\dim V_i = 1$ for all
    $i\in Q_0$, $\dim W_i = 1$ for $i=1$, $n$, $\dim W_i = 0$ for
    $i\neq 1,n$. Here we number vertices as usual.
    \begin{equation*}
        \xymatrix@C=1.2em{
          \CC \ar@<-.5ex>[rr]_{B_{2,1}} \ar@<.5ex>[d]^{J_1}
          && \CC \ar@<-.5ex>[ll]_{B_{1,2}} 
          \ar@<-.5ex>[rr]_{B_{3,2}} &&
          \CC \ar@<-.5ex>[ll]_{B_{2,3}} \ar@<-.5ex>[rr]_{B_{4,3}}
          && \ar@<-.5ex>[ll]_{B_{3,4}} \cdots
          \ar@<-.5ex>[rr]_{B_{n-1,n-2}} 
          && 
          \CC \ar@<-.5ex>[rr]_{B_{n,n-1}} \ar@<-.5ex>[ll]_{B_{n-2,n-1}} &&
          \CC \ar@<-.5ex>[ll]_{B_{n-1,n}} \ar@<.5ex>[d]^{J_n} 
          \\
          \CC \ar@<.5ex>[u]^{I_1} && && && && &&\CC \ar@<.5ex>[u]^{I_n}
        }
    \end{equation*}
    The ring of invariant functions is generated by
    \begin{equation*}
        x = J_n B_{n,n-1}\dots B_{2,1} I_1, \quad
        y = J_1 B_{1,2}\dots B_{n-1,n} I_n, \quad
        z = J_1 I_1,
    \end{equation*}
    which satisfies $xy = z^{n+1}$ thanks to equations
    $I_1 J_1 = B_{1,2} B_{2,1}$, etc (up to sign). Thus $\fM_0(V,W)$
    is the hypersurface $xy = z^{n+1}$ in $\CC^3$.

    Now we take the stability parameter $\zeta^\RR$ with
    $\zeta^\RR_i > 0$ for all $i$. Let us study
    $\pi\colon \fM(V,W) \to \fM_0(V,W)$. One first check that
    $(B,I,J)$ is $\zeta^\RR$-stable if it corresponds to
    $(x,y,z)\neq (0,0,0)$. In fact, there are no subspaces $S$, $T$
    appearing in the definition of $\zeta^\RR$-stability in this
    case. An interesting thing happens when $(x,y,z) = (0,0,0)$.
    Starting from $J_1 I_1 = 0$, we have $B_{i,i+1} B_{i+1,i} = 0$
    $i=1,\dots,n-1$, and $I_n J_n = 0$ thanks to $\mu = 0$. Since all
    vector spaces have dimension $1$, at least one of paired linear
    maps is zero. On the other hand, the $\zeta^\RR$-stability
    condition means that it is not possible that two linear maps
    starting from $V_i$ ($1\le i\le n$) cannot be simultaneously zero,
    as $V_i$ violates the condition then. Then one check that the only
    possibility is
    \begin{equation*}
        \xymatrix@C=1.2em{
          \CC \ar[d]^{J_1}
          && \CC \ar[ll]_{B_{1,2}} && \ar[ll]_{B_{2,3}} \cdots
          && \CC \ar[ll]_{B_{i-1,i}} \ar[rr]^{B_{i+1,i}} && \cdots
          \ar[rr]^{B_{n-1,n-2}}
          && \CC \ar[rr]^{B_{n,n-1}} &&
          \CC \ar[d]^{J_n} 
          \\
          \CC && && && && && && \CC 
        }
    \end{equation*}
    for some $i = 1,\dots, n$. Here only nonzero maps are written. By
    the $G_V$-action, we can normalize all maps as $1$ except
    $B_{i+1,i}$, $B_{i-1,i}$. Then the remaining data is
    $(B_{i+1,i}, B_{i-1,i})\in\CC^2\setminus 0$ modulo the action of
    $\GL(V_i) = \CC^\times$. We thus get the complex projective line
    $\mathbb CP^1$. Let us denote this by $\fC_i$. Thus we have
    $\La(V,W) = \fC_1\cup \fC_2\cup \dots \cup\fC_n$. The intersection
    $\fC_i\cap \fC_{i+1}$ is $B_{i+1,i} = 0$, hence is a single
    point. Other intersection $\fC_i\cap \fC_j$ is empty. Thus
    $\fC_i$'s form a chain of $n$ projective lines.
    In fact, $\fM(V,W)$ is the minimal resolution of the simple
    singularity $xy=z^n$ of type $A_n$.
\end{ex}

This example can be generalized to other $ADE$ singularities as
follows. Take an affine Dynkin diagram of type $ADE$ and consider the
primitive (positive) imaginary root vector $\delta$. We remove the
special vertex $0$ and take the corresponding vector space $V$ of the
finite $ADE$ quiver. The entry of the special vertex $0$ is always
$1$, and let us make it to $W$ for the finite $ADE$ quiver. For type
$A_n$, the vertex $0$ is connected to $1$ and $n$, hence we set
$W_1 = \CC$, $W_n = \CC$ and $W_i = 0$ otherwise as the above
example. For other types, $0$ is connected to a single vertex, say
$i_0$. Hence we take $W_{i_0} = \CC$ and $W_i = 0$ otherwise. Then
$\fM_0(V,W)$ is the simple singularity of the corresponding type, and
$\fM(V,W)$ is its minimal resolution. This is nothing but Kronheimer's construction \cite{Nakajima:Kr}.

The exceptional set of the minimal resolution, i.e., the inverse image
of $0$ under $\pi$ (which is our $\La(V,W)$) is known to be union of
projective lines intersecting as the Dynkin diagram. Let us check this
assertion for $D_4$.

\begin{ex}\label{ex:D4}
    We consider $\fM(V,W)$, $\fM_0(V,W)$ of type $D_4$ with
    \begin{equation*}
        \xymatrix@C=4em{
          \CC \ar@<-.5ex>[dr]_{I_2} && \CC 
          \ar@<-.5ex>[dl]_{B_{2,4}}
          \\
          & \CC^2 \ar@<-.5ex>[ul]_{J_2}
          \ar@<-.5ex>[dl]_{B_{1,2}}
          \ar@<-.5ex>[dr]_{B_{3,2}}
          \ar@<-.5ex>[ur]_{B_{4,2}}
          && 
          \\
          \CC \ar@<-.5ex>[ur]_{B_{2,1}}
          && \CC \ar@<-.5ex>[ul]_{B_{2,3}}
        }
    \end{equation*}
    where the upper left vector space is $W_2$ and others are
    $V_i$'s. As is observed in Example~\ref{ex:An}, it is helpful to
    consider a vector subspace where there are no coming linear
    maps. Suppose $V_1$ (the left lower space) is so, i.e.,
    $B_{1,2} = 0$. Then the data with $V_1$ removed, i.e.,
    \begin{equation*}
        \xymatrix@C=4em{
          \CC \ar@<-.5ex>[dr]_{I_2} && \CC 
          \ar@<-.5ex>[dl]_{B_{2,4}}
          \\
          & \CC^2 \ar@<-.5ex>[ul]_{J_2}
          \ar@<-.5ex>[dr]_{B_{3,2}}
          \ar@<-.5ex>[ur]_{B_{4,2}}
          && 
          \\
          && \CC \ar@<-.5ex>[ul]_{B_{2,3}}
        }
    \end{equation*}
    is also $\zeta$-stable. It is easy to check that the corresponding
    space $\fM(V',W)$ ($V' = V\ominus V_1$) is a single
    point. Conversely we start from the point $\fM(V',W)$ and add
    $B_{2,1}$ to get a point in $\fM(V,W)$. As in the type $A_n$ case,
    points constructed in this way form the complex line. Let us
    denote it by $\fC_1$. Replacing $V_1$ by $V_3$, $V_4$, we have
    $\fC_3$, $\fC_4$.

    Let us focus on $V_2$. Contrary to other vertices, we cannot
    remove the whole $V_2$, as it violates the
    $\zeta^\RR$-stability. We instead replace $V_2$ by one dimensional
    space $V_2$. Then all vector spaces are $1$-dimensional, and it is
    easy to check that the corresponding variety $\fM(V',W)$ is a
    single point given by
    \begin{equation*}
        \xymatrix@C=4em{
          \CC 
          && \CC  \ar[dl]_{B_{2,4}}
          \\
          & \CC \ar[ul]_{J_2}
          && 
          \\
          \CC \ar[ur]_{B_{2,1}}
          && \CC \ar[ul]_{B_{2,3}}
        }
    \end{equation*}
    All written maps are nonzero. When we add one dimensional vector
    space to $V'_2$, we consider it as a subspace in
    $V_1\oplus V_3\oplus V_4$ by $B_{1,2}$, $B_{3,2}$,
    $B_{4,2}$. Since $\mu=0$ is satisfied, it must be contained in the
    kernel of
    $(B_{2,1}, B_{2,3}, B_{2,4})\colon V_1\oplus V_3\oplus V_4\to
    V'_2$,
    which is a $2$-dimensional space. Therefore points constructed in
    this way also form the complex line, denoted by $\fC_2$. It is also
    clear that $\fC_2$ meets with $\fC_1$, $\fC_3$, $\fC_4$ at three distinct
    points, hence the configuration forms the Dynkin diagram $D_4$.
\end{ex}

Subvarieties $\fC_i$ are examples of Hecke correspondence, defined in
\S\ref{subsec:Hecke}, where the factor $\fM(V',W)$ is a single point
as we have seen above, hence is a subvariety in $\fM(V,W)$. It will be
also clear that why $\fC_i$ is a projective space : it is a projective
space associated with a certain $\operatorname{Ext}^1$.

\section{Representations of Kac-Moody Lie algebras}

In this section we assume that the quiver $Q$ has no edge
loops. Therefore we have the (symmetric) Kac-Moody Lie algebra
$\mathfrak g = \mathfrak g_Q$ whose Dynkin diagram is the graph
obtained from $Q$ by replacing oriented allows by unoriented edges. If
$Q$ is of type $ADE$, the Kac-Moody Lie algebra is a complex simple
Lie algebra of the corresponding type.

\begin{rem}
  When $Q$ has an edge loop, say the Jordan quiver, it was not a
  priori clear what is an analog of $\mathfrak g_Q$. Recently
  Maulik-Okounkov find a definition of a Lie algebra based on quiver
  varieties possibly with edge loops \cite{Nakajima:MO}. Bozec also
  studies a generalized crystal structure on the set of irreducible
  components \cite{Nakajima:Bo}.
\end{rem}

\subsection{Lagrangian subvariety}

\begin{thm}[\protect{\cite[Th.~5.8]{Nakajima:Na94}}]
    $\La(V,W)$ is a lagrangian subvariety in $\fM(V,W)$. In particular, all irreducible components of $\La(V,W)$ has dimension $\dim\fM(V,W)/2$.
\end{thm}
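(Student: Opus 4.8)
The plan is to produce a $\CC^\times$-action on $\fM(V,W)$ that rescales the symplectic form $\omega$ by the character $t\mapsto t$, and to read off everything from the weight decomposition at its fixed points together with properness of $\pi$. Since $\fM(V,W)$ depends only on the underlying graph of $Q$ (the doubled quiver $Q^{\rm dbl}$ is orientation-independent) and $Q$ has no edge loops, I would first replace $Q$ by an \emph{acyclically oriented} quiver with the same graph; this is the one place the no-edge-loop hypothesis enters, and its purpose will be visible below. Now let $t\in\CC^\times$ act (write $\sigma_t$ for this action) by
\[
  t\cdot(B,I,J)=\bigl((B_h)_{h\in Q_1},\ (t B_{\overline h})_{\overline h\in\overline{Q}_1},\ I,\ tJ\bigr),
\]
rescaling only the arrows in $\overline{Q}_1$ and the framing map $J$.

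First I would check that this descends to $\fM(V,W)$. From the formula for $\mu$ one gets $\mu\mapsto t\mu$, so $\mu^{-1}(0)$ is preserved; rescaling $B_{\overline h}$ and $J$ by a nonzero scalar alters neither $B$-invariance of a subspace, nor $\Ker J$, nor $\Ima I$, so $\zeta^\RR$-stability is preserved; and the action commutes with $G_V$. Writing the ambient form as $\sum_{h\in Q_1}\tr(dB_h\wedge dB_{\overline h})+\sum_i\tr(dI_i\wedge dJ_i)$ gives $\sigma_t^*\omega=t\,\omega$, so $\omega$ has weight one. Crucially, every non-constant generator of $\CC[\mu^{-1}(0)]^{G_V}$ now has strictly positive weight: a trace along an oriented cycle in $Q^{\rm dbl}$ has weight equal to the number of $\overline{Q}_1$-edges it uses, which is positive because an acyclic $Q$ has no oriented cycle built from $Q_1$ alone, and a path invariant $J\cdots I$ has weight at least one from the factor $J$. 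Hence $\pi\colon\fM(V,W)\to\fM_0(V,W)$ is equivariant for the induced $\CC^\times$-action on $\fM_0(V,W)$, which contracts $\fM_0(V,W)$ to the single fixed point $0$.

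With this in hand I would describe $\La(V,W)$ through Bialynicki--Birula theory. Because $\pi$ is proper and $\fM_0(V,W)$ contracts to $0$, the limit $\lim_{t\to 0}t\cdot x$ exists for every $x\in\fM(V,W)$, and the fixed locus $\fM(V,W)^{\CC^\times}$, mapping to the fixed point $0$, lies inside $\La(V,W)=\pi^{-1}(0)$. Conversely $x\in\La(V,W)$ iff its orbit stays in the compact set $\La(V,W)$, iff $\lim_{t\to\infty}t\cdot x$ exists; thus $\La(V,W)$ is exactly the union of the repelling cells $\fM(V,W)^-_F=\{x:\lim_{t\to\infty}t\cdot x\in F\}$ over the components $F$ of $\fM(V,W)^{\CC^\times}$, each a smooth affine bundle over $F$. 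Finally I would run the weight count at a fixed point $x$: decompose $T_x\fM(V,W)=\bigoplus_{a\in\ZZ}N_a$ into $\CC^\times$-weight spaces; since $\omega$ has weight one it pairs $N_a$ nondegenerately with $N_{1-a}$, so $\dim N_a=\dim N_{1-a}$. The tangent space to the cell $\fM(V,W)^-_F$ is $\bigoplus_{a\le 0}N_a$; two weights $a,a'\le 0$ never satisfy $a+a'=1$, so $\omega$ vanishes on it, while $\sum_{a\le 0}\dim N_a=\sum_{a\ge 1}\dim N_a=\tfrac12\dim\fM(V,W)$. Isotropy then propagates from $F$ to all of each cell by letting $t\to\infty$ in $\sigma_t^*\omega=t\,\omega$, since the $a\le 0$ directions keep $\sigma_{t*}$ bounded.

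Therefore every cell, hence every irreducible component of $\La(V,W)$, is isotropic of dimension exactly $\tfrac12\dim\fM(V,W)$, which is the Lagrangian property asserted. The step I expect to be the real obstacle is not the linear-algebra bookkeeping but guaranteeing that the fixed locus lies inside $\La(V,W)$, equivalently that the $\CC^\times$-action contracts $\fM_0(V,W)$ to the origin. Without this one only gets isotropy, hence $\dim\le\tfrac12\dim\fM(V,W)$, and $\La(V,W)$ could in principle acquire components of smaller dimension coming from cells over fixed components not contained in it. Securing the contraction is precisely where I must rule out weight-zero invariants, and it is the reason the no-edge-loop hypothesis, exploited through the acyclic reorientation, is indispensable.
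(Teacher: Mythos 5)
The paper gives you nothing to compare against here: for this theorem it says only that ``the proof is geometric, hence is omitted,'' deferring to \cite[Th.~5.8]{Nakajima:Na94}. Your proposal is, in substance, a correct reconstruction of that standard geometric argument (see also the treatment in \cite{Nakajima:Gi}): a $\CC^\times$-action scaling the arrows of $\overline{Q}_1$ and the maps $J$, so that the symplectic form has weight one, combined with properness of $\pi$, contraction of $\fM_0(V,W)$ to $0$, the Bia{\l}ynicki--Birula decomposition, and the pairing of the weight-$a$ with the weight-$(1-a)$ part of the tangent space at a fixed point. You have also correctly identified the genuine crux: every nonconstant $G_V$-invariant must have strictly positive weight, which is exactly what the acyclic reorientation buys and exactly where the no-edge-loop hypothesis enters (a loop $\ell$ gives the weight-zero invariants $\tr(B_\ell^k)$, consistent with the paper's remark that for the Jordan quiver $\fM(V,W)=\CC^2$, $\La(V,W)=\{0\}$ is not lagrangian). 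This step is not cosmetic: with the cyclic orientation of affine $A_n$, for instance, the unbarred cycle gives a weight-zero invariant, the fixed locus of $\fM_0(V,W)$ becomes positive-dimensional, $\La(V,W)$ is then no longer a union of repelling cells, and the whole dimension count collapses. Two asides in your write-up deserve to be promoted to lemmas: (i) orientation-independence of $\fM(V,W)$ requires the sign-twisted identification $(B_h,B_{\overline h})\mapsto(B_{\overline h},-B_h)$, which one must check intertwines the moment maps and preserves both the stability condition (which never refers to the orientation) and the symplectic form; (ii) the claim that every point of $\La(V,W)$ lies in a repelling cell rests on the valuative criterion --- the orbit of $x\in\La(V,W)$ stays in the compact set $\La(V,W)$, hence the orbit map extends over $t=\infty$ --- while conversely the existence of $\lim_{t\to\infty}t\cdot x$ forces $\pi(x)=0$ because all invariants have positive weight. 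With these spelled out, your argument is complete and is essentially the proof of the cited source.
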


The proof is geometric, hence is omitted. We at least see that it is
true for above examples. One can also check that it is not true for
Jordan quiver with $\dim V = \dim W = 1$, as $\fM(V,W) = \CC^2$,
$\La(V,W) = \{0\}$. Thus it is important to assume that $Q$ has no
edge loops.

We consider the top degree homology group of $\La(V,W)$
\begin{equation*}
    H_{d(V,W)}(\La(V,W)),
\end{equation*}
where $d(V,W) = \dim_\CC\fM(V,W) = \dim\mathbf M(V,W) - 2\dim G_V$. It
should not be confused with cohomology groups of modules. $\La(V,W)$
is a topological space with classical topology, and we consider its
singular homology group.
Here we consider homology groups with \emph{complex} coefficients,
though we can consider \emph{integer} coefficients also. It is known
that $\La(V,W)$ is a lagrangian subvariety in $\fM(V,W)$ with respect
to the symplectic structure explained in the previous section. In
particular, its dimension is half of $d(V,W)$, hence the above is the
top degree homology. Thus $H_{d(V,W)}(\La(V,W))$ has a base given by
irreducible components of $\La(V,W)$.

A reader who is not comfortable with homology groups could use the
space of constructible functions on $\La(V,W)$ instead. The definition
of the action is in parallel, though the construction of a base
corresponding to irreducible components is more involved. The
construction of the base is due to Lusztig, and is called
\emph{semicanonical base}.

\subsection{Examples}

Our main goal in this section is to explain that the direct sum\linebreak
$\bigoplus_V H_{d(V,W)}(\La(V,W))$ has a structure of an integrable
representation of $\mathfrak g$ with highest weight $\dim W$. Let us
first check it in the level of dimension (or weights).

Take $A_1$ as in Example~\ref{ex:A1st}. We have
\begin{equation*}
  \dim H_{d(V,W)}(\La(V,W)) =
  \begin{cases}
    1 & \text{if $0\le \dim V\le\dim W$},\\
    0 & \text{otherwise}.
  \end{cases}
\end{equation*}
This is the same as weight spaces of the finite dimensional
irreducible representation of $\mathfrak g = \mathfrak{sl}(2)$ with
highest weight $n = \dim W$.

Since this is a review for the proceeding of Symposium on Ring Theory
and Representation Theory, let us review the usual construction of
this representation. It is realized as the space of degree $n$
homogeneous polynomials in two variables:
\begin{equation*}
  \CC x^n\oplus \CC x^{n-1}y \oplus \dots\oplus \CC xy^{n-1}\oplus \CC y^n.
\end{equation*}
Here the $\mathfrak{sl}(2)$-action is induced from that on
$\operatorname{Span}(x,y) = \CC^2$. More concretely let us take a
standard base of $\mathfrak{sl}(2)$ as
\begin{equation}\label{eq:6}
   H =
   \begin{pmatrix}
     1 & 0 \\ 0 & -1
   \end{pmatrix}, \quad
   E =
   \begin{pmatrix}
     0 & 1 \\ 0 & 0
   \end{pmatrix}, \quad
   F =
   \begin{pmatrix}
     0 & 0 \\ 1 & 0
   \end{pmatrix}.
\end{equation}
Then
\begin{equation*}
  H x = x, \quad H y = -y, \quad
  E x = 0, \quad E y = x, \quad F x = y, \quad F y = 0.
\end{equation*}
The induced action means that $H$, $E$, $F$ acts on homogeneous
polynomials as derivation, for example
\begin{equation*}
  H x^n = n x^{n-1} Hx = nx^n,\quad E x^n = n x^{n-1} Ex = 0, \quad
  F x^n = n x^{n-1} Fx = nx^{n-1}y,\qquad \text{etc}.
\end{equation*}

Observe that $x^n$, $x^{n-1}y$, \dots, $y^n$ are eigenvectors of $H$
with eigenvalues $n$, $n-2$, \dots, $-n$. In this example, weight
spaces are all $1$-dimensional, and are scalar multiplies of those
vectors. (We have $(n+1)$ eigenvectors in total, and the total
dimension of the representation is $(n+1)$.)

Thus we see that dimension of weight spaces matches with dimension of
homology groups above. At this stage it looks just a coincidence.

Next consider Example~\ref{ex:An}. From we saw there, we have
\begin{equation*}
  H_{d(V,W)}(\La(V,W)) = H_{2}(\fC_1\cup\fC_2\cup\cdots\fC_n)
  = 
  \CC[\fC_1]\oplus \CC[\fC_2]\oplus\dots\oplus\CC[\fC_n],
\end{equation*}
where $[\ ]$ denotes the fundamental class. In this example, the Lie
algebra $\mathfrak g$ is $\mathfrak{sl}(n+1)$, and the representation
has the highest weight $\dim W = \varpi_1 + \varpi_n$, in other words,
it is the adjoint representation, i.e., the Lie algebra itself
considered as a representation with the action given by the Lie
bracket. Since we choose a particular $V$ (unlike $A_1$ example
above), the homology group corresponds to a weight space. In this
example, we consider the zero weight space, which is the space of
diagonal matrices in $\mathfrak{sl}(n+1)$. It is indeed
$n$-dimensional.

Let us again spell out the weight spaces of the adjoint representation
concretely. $\mathfrak{sl}(n+1)$ is the space of trace-free $(n+1)\times(n+1)$ complex matrices, regarded as a Lie algebra by the bracket $[A,B] = AB - BA$.
Let us denote by $\mathfrak h$ the space of diagonal matrices in
$\mathfrak{sl}(n+1)$. It forms a commutative Lie subalgebra in
$\mathfrak{sl}(n+1)$, and called a Cartan subalgebra. We have vector
space decomposition
\begin{equation*}
  \mathfrak{sl}(n+1) =
  \mathfrak h \oplus \bigoplus_{i\neq j} \CC E_{ij},
\end{equation*}
where $E_{ij}$ is the matrix unit for the entry $(i,j)$. This is the
simultaneous eigenspace decomposition of $\mathfrak{sl}(n+1)$ with
respect to the action of elements in $\mathfrak h$. The space
$\mathfrak h$ itself is the zero eigenspace, and $E_{ij}$ is an
eigenvector.

\begin{Exercise}
  Let $W$ be the same as above, but consider $\fM(V,W)$ for different
  $V$. Show that $\fM(V,W)$ and $\La(V,W)$ are either empty set or a
  single point. Check that it coincides with the weight spaces of the
  adjoint representation of $\mathfrak{sl}(n+1)$. (Recall the homology
  group of the empty set is $0$-dimensional vector space.) For
  example, if we remove $\CC$ at the $i$th vertex, it corresponds to
  $\CC E_{i,i+1}$.
\end{Exercise}

This is easy if all $V_i$ are at most $1$-dimensional (corresponding
to the matrix unit $E_{ij}$ with $i < j$). But one needs to use the
stability condition in an essential way if some $V_i$ has dimension
greater than $1$.

One could also show that Example~\ref{ex:D4} corresponds to the
adjoint representation of $\mathfrak g = \mathfrak{so}(8)$ so that
$H_{d(V,W)}(\La(V,W))$ is the space of diagonal matrices, and spaces
for other $V$ are either $0$ or $1$-dimensional. But this becomes even
more tedious calculation and the author never check it by mere
analysis without using general structure theory expained below.

\subsection{Convolution product}

As we write above already, we use homology groups to give a geometric
realization of representations of Kac-Moody Lie algebras.
A reader who prefers constructible functions skip this subsection and
goes to the next.

For homology groups, it is technically simpler to work with a version
of the Borel-Moore homology group, which turns out to be isomorphic to
the usual homology group for $\La(V,W)$.\footnote{This is because
  $\La(V,W)$ is a complex projective variety, hence a finite CW
  complex.}
A review of the definition of the Borel-Moore homology group and its
fundamental properties is found in \cite[App.~B]{Nakajima:Fu}. In our
situation, $\La(V,W)$ is a closed subspace in a smooth oriented
manifold $\fM(V,W)$ of real dimension $2d(V,W)$. Then the Borel-Moore
homology group is defined as
\begin{equation*}
  \begin{aligned}[t]
    H_*(\La(V,W)) & = H^{2d(V,W)-*}(\fM(V,W), \fM(V,W)\setminus\La(V,W))
    \\
    & = H^{2d(V,W)-*}(\fM(V,W), \La(V,W)^c).
  \end{aligned}
\end{equation*}
In fact, this definition makes sense for any embedding of $\La(V,W)$
into a smooth manifold, and is independent of the choice.

Let us take another $Q_0$-graded vector space $V'$ and consider
varieties $\La(V',W)$ also. Let us consider the fiber product
$Z(V,V',W)$,
\begin{equation*}
  Z(V,V',W) = \fM(V,W)\times_{\fM_0(V\oplus V',W)} \fM(V',W),
\end{equation*}
where $\fM(V,W) \text{(resp.\ $\fM(V',W)$)} \to\fM_0(V\oplus V',W)$ is
the composite of
$\pi\colon \fM(V,W) \linebreak[3]\text{(resp.\ $\fM(V',W)$)} \to
\fM_0(V,W) \text{(resp.\ $\fM_0(V',W)$)}$ and closed embeddings
$\fM_0(V,W) \linebreak[4] \text{(resp.\ $\fM(V',W)$)}\to \fM_0(V\oplus
V',W)$ is given by setting data for $B$ in $V'$ (resp.\ $V$) by
$0$. It is a closed subvariety in $\fM(V,W)\times\fM(V',W)$, and
called \emph{an analog of Steinberg variety} or \emph{a Steinberg-type
  variety}, as a similar space is considered by Steinberg for the case
of the cotangent bundle of a flag variety.
Note that the restriction of projection
$p_1, p_2 \colon Z(V,V',W)\to \fM(V,W), \fM(V',W)$ are proper (i.e.,
inverse images of compact subsets are compact) and
$p_2(p_1^{-1}(\La(V,W)))\subset \La(V',W)$.

We define the Borel-Moore homology group of $Z(V,V',W)$ as above,
using $\fM(V,W)\times\fM(V',W)$. Suppose $c\in H_k(Z(V,V',W))$. Then
we define the convolution product with $c$ by
\begin{equation*}
  c\ast \alpha = p_{2*}(c \cap p_1^*(\alpha)), \qquad
  \alpha\in H_{k'}(\La(V,W)).
\end{equation*}
Let us check that this is well-defined step by step. First $\alpha$
is in $H^{2d(V,W)-k}(\fM(V,W),\La(V,W)^c)$ as above.
Then its pull-back $p_1^*(\alpha)$ is
$H^{2d(V,W)-k}(\fM(V,W)\times\fM(V',W),p_1^{-1}(\La(V,W))^c)$. Its
intersection $c\cap p_1^*(\alpha)$ with $c$ is a class in
\begin{equation*}
  H^{4d(V,W)+2d(V',W)-k-k'}(\fM(V,W)\times\fM(V',W),
  (p_1^{-1}(\La(V,W))\cap Z(V,V',W))^c),
\end{equation*}
as we consider $c$ as an element of
$H^{2d(V,W)+2d(V',W)-k'}(\fM(V,W)\times\fM(V',W),
Z(V,V',W)^c)$.
Hence $c\cap p_1^*(\alpha)$ is a class in the Borel-Moore homology group
$H_{k+k'-2d(V,W)}(p_1^{-1}(\La(V,W))\cap Z(V,V',W))$. By our assumption
$p_1^{-1}(\La(V,W))\cap Z(V,V',W)$ is a compact set, hence the pushforward
homomorphism $p_{2*}\colon H_*(p_1^{-1}(\La(V,W))\cap Z(V,V',W))\to
H_*(\La(V',W))$ is defined. (See \cite[\S B2]{Nakajima:Fu}.)

From the computation of degrees, if $\alpha\in H_{d(V,W)}(\La(V,W))$,
then $c\ast \alpha\in H_{k - d(V,W)}(\La(V',W))$. Therefore if the
degree $k$ of $c$ is $d(V,W)+d(V',W)$, the degree of $c\ast \alpha$ is
$d(V',W)$. Note that $d(V,W)+d(V',W)$ is the complex dimension of
$\fM(V,W)\times \fM(V',W)$, hence the degree of $c$ is
$d(V,W)+d(V',W)$ means that it is a half-dimensional class in
$\fM(V,W)\times \fM(V',W)$. It is known that $Z(V,V',W)$ is lagrangian
for type $ADE$ (see \cite[Th.~7.2]{Nakajima:Na98}). Hence fundamental
classes of irreducible components of $Z(V,V',W)$ are examples of
half-dimensional cycles. 

\begin{ex}
  Consider the diagonal $\Delta\fM(V,W)$ in
  $\fM(V,W)\times\fM(V,W)$. Its fundamental class gives an operator
  $\Delta\fM(V,W)\ast\bullet\colon H_{d(V,W)}(\La(V,W))\to
  H_{d(V,W)}(\La(V,W))$ by the above cconstruction . It is the
  identity operator.
\end{ex}

\subsection{Hecke correspondence}\label{subsec:Hecke}

Fix $i\in Q_0$ and consider a pair $V'$, $V = V'\oplus S_i$ of
$Q_0$-graded spaces, where $S_i$ is $1$-dimensional at $i$ and $0$ at
other vertices. We define
$\mathfrak P_i(V,W)\subset\fM(V',W)\times\fM(V,W)$ consisting of
points $([B',I',J'], [B,I,J])$ such that $[B',I',J']$ is a framed
submodule of $[B,I,J]$ (\cite[\S5]{Nakajima:Na98}). More precisely, it
means that there is an injective linear map $\xi\colon V'\to V$ such
that $B\xi = \xi B'$, $I\xi = I'$, $J = J'\xi$. Thus we have a short
exact sequence of framed representations
\begin{equation}\label{eq:10}
   0 \to (B',I',J') \xrightarrow{\xi} (B,I,J) \to S_i \to 0,
\end{equation}
where $S_i$ is now regarded as a (simple) module with all linear maps
are $0$.

Let us explain the definition of operators for spaces of constructible
functions. We have two projections
$p_1, p_2\colon\mathfrak P_i(V,W)\to \fM(V',W), \fM(V,W)$. If $f$ is a
constructible function on $\La(V',W)$, we pull back it to
$\mathfrak P_i(V,W)\cap p_1^{-1}(\La(V',W))$ as
$p_1^* f = f\circ p_1$. Then we define its pushforward
$p_{2!} (p_1^* f)$ defined by
\begin{equation*}
  \left(p_{2!} (p_1^*f)\right) (x)
  = \sum_{a\in\CC} a \chi(p_2^{-1}(x)\cap (p_1^*f)^{-1}(a)),
\end{equation*}
where $\chi(\ )$ is the topological Euler number. This definition
corresponds to \eqref{eq:2} and we exchange roles of $p_1$, $p_2$ for
\eqref{eq:7}.

Let us explain the definition for homology groups. It was shown that
$\mathfrak P_i(V,W)$ is a smooth half-dimensional closed subvariety in
$\fM(V',W)\times\fM(V,W)$. By its definition, it is contained in
$Z(V',V,W)$. Thus the fundamental class $[\mathfrak P_i(V,W)]$ defines
an operator
\begin{equation}\label{eq:2}
  [\mathfrak P_i(V,W)]\ast\bullet\colon
  H_{d(V',W)}(\mathfrak L(V',W)) \to H_{d(V,W)}(\mathfrak L(V,W)).
\end{equation}
Changing the role of $\fM(V,W)$, $\fM(V',W)$, we also have
\begin{equation}\label{eq:7}
  [\mathfrak P_i(V,W)]\ast\bullet\colon
  H_{d(V,W)}(\mathfrak L(V,W)) \to H_{d(V',W)}(\mathfrak L(V',W)).
\end{equation}

\subsection{Definition of Kac-Moody action}\label{subsec:definition-kac-moody}

Like \eqref{eq:6} for $\mathfrak{sl}(2)$, a complex simple Lie algebra
has a presentation given by generators $E_i$, $F_i$, $H_i$ with
certain relations. For an example, generators for $\mathfrak{sl}(n+1)$
are $E_i = E_{i,i+1}$, $F_i = E_{i+1,i}$,
$H_i = E_{ii} - E_{i+1,i+1}$, where $E_{ij}$ is the matrix unit as
before. 
For a Kac-Moody Lie algera $\mathfrak{g}$, one needs to consider
Cartan subalgebra $\mathfrak{h}$, which is larger than
$\operatorname{Span} \{ H_i \}$. This is because we want to $H_i$ to
be linearly independent, even when the Cartan matrix has kernel. But
this is basically just convention and is not so important.
Let us ignore this difference, and defines action of $E_i$, $F_i$,
$H_i$ on the direct sum $\bigoplus_V H_{d(V,W)}(\La(V,W))$.

Let
\begin{equation}\label{eq:9}
  \begin{gathered}[m]
    F_i = \eqref{eq:2}, \qquad
    E_i = (-1)^{(d(V',W) - d(V,W))/2} \times \eqref{eq:7},
    \\
    H_i = (\dim W_i - \sum_{j} a_{ij} \dim V_j)
    \operatorname{id}_{H_{d(V,W)}(\La(V,W)},
  \end{gathered}
\end{equation}
where $a_{ij}$ is the Cartan matrix, i.e.,
$2\delta_{ij} - \# \{h\in Q_1^{\rm dbl}\mid \vout{h} = i, \vin{h} =
j\}$.
Note that $(\dim W_i - \sum_{j} a_{ij} \dim V_j)$ is the Euler
characteristic of the complex \eqref{eq:8} for
$(B^1,I^1,J^1) = (B,I,J)$, $(B^2, I^2,J^2) = S_i$, i.e.,
$(V^2,W^2) = (S_i,0)$ with linear maps $(B^2,I^2,J^2) = 0$. This is a
simple observation, and its brief explanation will be given below. It
is even more important to consider \eqref{eq:8} when one consider
larger algebras action on homology/K-theory of quiver varieties.

\begin{thm}[\cite{Nakajima:Na94,Nakajima:Na98}]\label{thm:KM}
  Operators \eqref{eq:9} satisfy the defining relations of the
  Kac-Moody Lie algebra $\mathfrak g$. Hence
  $\bigoplus_V H_{d(V,W)}(\La(V,W))$ is a representation of
  $\mathfrak g$. Moreover it is an \textup(irreducible\textup)
  integrable highest weight representation with the highest weight
  vector $[\fM(0,W)]\in H_0(\fM(0,W))$.
\end{thm}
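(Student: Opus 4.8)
The plan is to check that the operators in \eqref{eq:9} satisfy the Chevalley--Serre relations of $\mathfrak g$, then to establish integrability, and finally to identify the module with the irreducible highest weight representation $L(\dim W)$. I would organize the verification so that the easy, purely numerical relations come first, isolating the one genuinely geometric computation.

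First the relations involving $H_i$. Since $H_i$ acts on $H_{d(V,W)}(\La(V,W))$ as the scalar $\dim W_i - \sum_j a_{ij}\dim V_j$, the relation $[H_i,H_j]=0$ is automatic. The operator $F_j$ of \eqref{eq:2} sends $H_{d(V',W)}(\La(V',W))$ to $H_{d(V,W)}(\La(V,W))$ with $V = V'\oplus S_j$, hence raises $\dim V_j$ by one and so lowers the $H_i$-eigenvalue by $a_{ij}$; dually $E_j$ of \eqref{eq:7} lowers $\dim V_j$ and raises the eigenvalue by $a_{ij}$. This bookkeeping immediately yields $[H_i,E_j]=a_{ij}E_j$ and $[H_i,F_j]=-a_{ij}F_j$.

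The heart of the argument, and the main obstacle, is $[E_i,F_j]=\delta_{ij}H_i$. Both $E_iF_j$ and $F_jE_i$ are convolutions with cycles obtained by composing two Hecke correspondences inside the Steinberg-type variety $Z(V,V,W)$. When $i\neq j$ the two modifications occur at distinct vertices and are independent, so the composed correspondences for $E_iF_j$ and $F_jE_i$ coincide and the commutator vanishes. When $i=j$, I would show that $E_iF_i$ and $F_iE_i$, viewed as correspondences in $\fM(V,W)\times\fM(V,W)$, agree away from the diagonal $\Delta\fM(V,W)$: the locus where the added and the removed copy of $S_i$ occupy different positions matches between the two products and cancels in the commutator. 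Hence $[E_i,F_i]$ is a scalar multiple of the identity, and the scalar is extracted by a local excess-intersection (Euler class) computation along the diagonal. Controlling this non-transverse intersection is the hard part; the upshot is that the multiplicity equals the Euler characteristic of the complex \eqref{eq:8} taken with $(B^2,I^2,J^2)=S_i$, namely $\dim W_i-\sum_j a_{ij}\dim V_j$, which is precisely the eigenvalue of $H_i$. Here the stability condition, which forces the first cohomology of \eqref{eq:8} with $S_i$ to vanish, is exactly what makes this Euler characteristic the clean diagonal contribution.

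Finally, integrability and irreducibility. The operator $E_i$ is locally nilpotent because it lowers $\dim V_i\ge 0$; I would then verify that each triple $(E_i,F_i,H_i)$ acts integrably, for instance by reducing the $i$-th $\mathfrak{sl}(2)$-action to the rank-one ($A_1$) situation, so that every vector lies in a finite-dimensional $\mathfrak{sl}(2)$-submodule. Granting the relations above together with integrability, the Serre relators act by zero by the standard Gabber--Kac fact that in an integrable module over the Lie algebra presented by the Chevalley relations alone the defining ideal of $\mathfrak g$ acts trivially; thus the action descends to $\mathfrak g$ and the Serre relations hold. Since $\fM(0,W)$ is a single point, the class $[\fM(0,W)]\in H_0(\fM(0,W))$ is annihilated by every $E_i$ and is an $H_i$-eigenvector of eigenvalue $\dim W_i$, hence a highest weight vector of dominant integral weight $\dim W$. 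To conclude I would prove the geometric generation statement that the whole module is generated by this vector, so that, being an integrable highest weight module of dominant integral highest weight, it is the irreducible representation $L(\dim W)$; alternatively irreducibility can be pinned down by matching each weight-space dimension, the number of irreducible components of $\La(V,W)$, with the weight multiplicities of $L(\dim W)$.
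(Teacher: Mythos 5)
Your treatment of the core relation $[E_i,F_j]=\delta_{ij}H_i$ is essentially the paper's argument: off-diagonal cancellation of the composed Hecke correspondences for $i\ne j$ and for the non-diagonal part when $i=j$, with the diagonal multiplicity identified as the Euler characteristic of \eqref{eq:8} taken with $(B^2,I^2,J^2)=S_i$, made computable because stability kills the first cohomology. The paper packages the diagonal computation through the fibers of $p_1,p_2$, which are projective spaces on $\operatorname{Ext}^1(S_i,\cdot)$ and $\Hom(\cdot,S_i)$, together with $\chi(\mathbb CP^n)=n+1$; this is the same count as your excess-intersection formulation. Your deduction of the Serre relations from these relations plus integrability, and the identification of $[\fM(0,W)]$ as a highest weight vector, also match the paper.

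Two later steps, however, have genuine gaps. First, local nilpotency of $F_i$ cannot be obtained by ``reducing to the rank-one situation'': formal $\mathfrak{sl}(2)$ theory is insufficient, since a Verma module has $E$ locally nilpotent and integral $H$-eigenvalues while $F$ acts freely. What is needed, and what the paper uses, is a geometric vanishing: for a $\zeta^\RR$-stable point the map $\alpha$ in \eqref{eq:8} with $V^1=S_i$ must be injective, forcing $\dim V_i\le \dim W_i+\sum_{j\ne i}(-a_{ij})\dim V_j$; hence $\fM(V,W)=\emptyset$ once $\dim V_i$ is large, and repeated application of $F_i$ eventually lands in zero homology groups. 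Second, the highest-weight (generation) property is left as a placeholder in your plan, and your fallback---matching the number of irreducible components of $\La(V,W)$ with the weight multiplicities of $L(\dim W)$---is circular: that equality is a consequence of this very theorem (the component classes form a basis of the weight space), not an independently available count. The paper's actual argument is the inductive construction of \S\ref{subsec:crystal}: a generic point of a component $Y$ sits in the exact sequence \eqref{eq:11} over a stable $(B',I',J')$ with $\Hom((B',I',J'),S_i)=0$, so an open part of $Y$ is a Grassmann bundle of $r$-planes in $\operatorname{Ext}^1(S_i,(B',I',J'))$ over an open part of a component $Y'$ of a smaller lagrangian, which yields the triangular relation $\frac{F_i^r}{r!}[Y']=\pm[Y]+\sum_{\varepsilon_i(Y'')>r}c_{Y''}[Y'']$; induction on $\dim V$ and $\varepsilon_i$ (using nilpotency of points of $\La(V,W)$ to guarantee $\varepsilon_i(Y)>0$ for some $i$) then gives generation. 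Some argument of this kind is indispensable and is missing from your proposal.
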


When $V = 0$, the quiver variety $\fM(0,W)$ is a single point as all
linear maps $B$, $I$, $J$ are automatically $0$. As written above,
this is the highest weight vector with highest weight $\dim W$, i.e.,
it satisfies
\begin{equation*}
  \begin{split}
    & E_i [\fM(0,W)] = 0,\quad  H_i[\fM(0,W)] = \dim W_i [\fM(0,W)]
        \qquad \text{for all $i\in Q_0$},\\
    & \mathbf U(\mathfrak g)[\fM(0,W)] = \bigoplus_V H_{d(V,W)}(\fM(V,W)),
  \end{split}
\end{equation*}
where $\mathbf U(\mathfrak g)$ is the universal enveloping algebra of
$\mathfrak g$. The second condition, more concretely, means that the
direct sum $\bigoplus_V H_{d(V,W)}(\fM(V,W))$ is spanned by vectors
obtained from $[\fM(0,W)]$ by successively applying various $F_i$.

An integrability means that $E_i$, $F_i$ are locally nilpotent, that
is $E_i^N m = 0 = F_i^N m$ for sufficiently large $N = N(m)$ for a
vector $m$. (For a complex simple Lie algebra, it is known to be
equivalent to that the representation is finite dimensional.) It is
known that an integrable highest weight represenation is automatically
irreducible.

Let us briefly explain the proof of the first part of
Theorem~\ref{thm:KM}. The most delicate relation to check is
$[E_i, F_j] = \delta_{ij} H_i$. Once this is proved, the so-called
Serre relation follows from it together with the integrability.

It is relatively easy to check the relation for $i\neq j$. For the
proof of $[E_i, F_i] = H_i$, a key is to understand fibers of
projections $p_1, p_2\colon \mathfrak P(V,W)\to \fM(V',W), \fM(V,W)$.
By \eqref{eq:10}, the fiber of $p_2$ at $[B,I,J]$ is isomorphic to the
projective space associated with the vector space
$\Hom((B,I,J), S_i)$, where $\Hom$ is the space of homomorphism as
framed representations. This is the first cohomology of the complex
\eqref{eq:8} for $(B^1,I^1,J^1) = (B,I,J)$, $(B^2,I^2,J^2) = S_i$. As
we have remarked before, it is dual to the third cohomology of the
complex \eqref{eq:8} with $(B^1,I^1,J^1)$ and $(B^2,I^2,J^2)$ are
swapped.
On the other hand, the fiber of $p_1$ at $[B',I',J']$ is isomorphic to
the projective space associated with
$\operatorname{Ext}^1(S_i, (B',I',J'))$. This is the middle cohomology
of the complex \eqref{eq:8} for $(V^1,W^1) = (S_i, 0)$,
$(V^2,W^2) = (V',W)$. Then one observes that the complex \eqref{eq:8}
with $(V^1,W^1) = (S_i,0)$ has the vanishing first cohomology group if
$(B^2,I^2,J^2)$ satisfy the stability condition for $\zeta^\RR_i > 0$.
This is obvious as $0\neq \xi\in\Ker\alpha$ realizes $S_i$ as a
submodule of $(B,I,J)$. Then $\zeta^\RR(\dim S_i) = \zeta_i^\RR > 0$
violates the stability condition.
Thus the difference of dimensions of the second and third
cohomology groups of \eqref{eq:8} is the Euler characteristic of
\eqref{eq:8}, hence can be computed.
Now one uses that the Euler number of the complex projective space
$\mathbb CP^n$ is $n+1$ to complete the calculation.

\subsection{Inductive construction of irreducible components}
\label{subsec:crystal}

Let us sketch the proof of the second statement of
Theorem~\ref{thm:KM}. It is clear that $E_i^N m = 0$ for sufficiently
large $N$, as the dimension of $V_i$ cannot be negative. For
$F_i^N m = 0$, we use the vanishing of the first cohomology group of
\eqref{eq:8} for $V^1 = S_i$. If $N$ is sufficiently large, the
dimension of the first term exceeds that of the middle, hence $\alpha$
cannot be injective.

Let us next explain why the representation is highest weight. It means
that $\bigoplus H_{d(V,W)}(\La(V,W))$ is spanned by vectors obtained
from $[\fM(0,W)]$ by successively applying various $F_i$. This will be
shown by an inductive construction of irreducible components of
$\La(V,W)$. In fact, it also gives Kashiwara crystal structure on the
union of the set of irreducible components of $\La(V,W)$ with various
$V$. Since it is not our purpose to review crytal bases, we do not
explain this statement, and we concentrate only on the inductive
construction.

Let us take $[B,I,J]\in\fM(V,W)$ and consider \eqref{eq:8} with
$(B^1,I^1,J^1) = S_i$, $(B^2,I^2,J^2) = (B,I,J)$, i.e., 
\begin{equation*}
    V_i \xrightarrow{\alpha} \bigoplus_{j\neq i} V_j^{\oplus -a_{ij}} \oplus W_i
    \xrightarrow{\beta} V_i.
\end{equation*}
As we noted, the first cohomology group vanishes. Consider the third
cohomology group, which is the dual of the space $\Hom((B,I,J),S_i)$
of homomorphisms from $(B,I,J)$ to $S_i$. We have a natural
homomorphism
\(
    (B,I,J) \to \Hom((B,I,J),S_i)^\vee\otimes_\CC S_i,
\)
which is given by the natural projection
$V_i\to \operatorname{Cok}\beta$. In particular, it is surjective. We
consider the kernel of the natural homomorphism, and denote it by
$(B',I',J')$. Thus we have
\begin{equation}\label{eq:11}
    0\to (B',I',J')\to (B,I,J)\to
    \Hom((B,I,J),S_i)^\vee\otimes S_i\to 0.
\end{equation}
One can check that $(B',I',J')$ is $\zeta^\RR$-stable,
hence defines a point in $\fM(V',W)$ with
$\dim V' = \dim V - r\dim S_i$, where $r = \dim
\Hom((B,I,J),S_i)$. Moreover we have the induced exact sequence
\begin{equation*}
    \Hom((B,I,J),S_i)\otimes \Hom(S_i,S_i) \to
    \Hom((B,I,J),S_i) \to
    \Hom((B',I',J'),S_i) 
\end{equation*}
from the short exact sequence \eqref{eq:11}.
The first homomorphism is an isomorphism by the construction. The
second homomorphism is surjective as
$\operatorname{Ext}^1(S_i,S_i) = 0$. Therefore we conclude
$\Hom((B',I',J'),S_i) = 0$. It means that the complex
\begin{equation}\label{eq:12}
    V'_i \xrightarrow{\alpha'} \bigoplus_{j\neq i} V_j^{\prime\oplus -a_{ij}}
    \oplus W_i
    \xrightarrow{\beta'} V'_i
\end{equation}
has the vanishing third cohomology group.

Conversely we take $(B',I',J')$ with $\Hom((B',I',J'),S_i) = 0$. Then
we recover $(B,I,J)$ from an $r$-dimensional subspace in
$\operatorname{Ext}^1(S_i,(B',I',J')) = \Ker\beta'/\Ima\alpha'$.

We use this construction to understand $H_{d(V,W)}(\La(V,W))$ as
follows. (I learned this argument in \cite{Nakajima:Lu-crystal}.)
Let $Y$ be an irreducible component of $\La(V,W)$ with $V\neq 0$. We
define $\varepsilon_i(Y)$ be $\dim \Hom((B,I,J), S_i)$ for a generic
$[B,I,J]\in Y$. From the nilpotency of $(B,I,J)$, there exists
$i\in Q_0$ such that $\varepsilon_i(Y) > 0$. Set
$r = \varepsilon_i(Y)$. Then we
$Y^\circ = \{ [B,I,J]\in Y \mid \dim \Hom((B,I,J), S_i) = r \}$ is
open in $Y$. We apply the above construction to $[B,I,J]\in Y^\circ$
to obtain an irreducible variety $Y^{\prime\circ}$ in $\fM(V',W)$ with
$\dim V' = \dim V - r\dim S_i$. It can be shown that its closure
$Y' = \overline{Y^{\prime\circ}}$ is an irreducible component of
$\La(V',W)$. In fact, $Y'\subset\La(V',W)$ is clear from the
definition, as $(B,I,J)$ and $(B',I',J')$ have the same image under
$\pi$. Next note that
\begin{equation*}
    d(V,W) - d(V',W) = 2 r
    (\text{Euler characteristic of \eqref{eq:12}}
      - r).
\end{equation*}
On the other hand, $Y^\circ$ is the total space of Grassmann bundle of
$r$-planes in the vector bundle over $Y^{\prime\circ}$ with fiber
$\operatorname{Ext}^1(S_i,(B',I',J'))$. Hence its dimension is equal
to
$\dim Y^{\prime\circ} + r(\dim \operatorname{Ext}^1(S_i,(B',I',J')) -
r)$.
Since the Euler characteristic of \eqref{eq:12} is
$\dim \operatorname{Ext}^1(S_i,(B',I',J'))$, we conclude that
$\dim Y'$ is half-dimensional in $\fM(V',W)$.

We deduce
\begin{equation*}
    \frac{F_i^r}{r !} [Y'] = \pm [Y] + \sum_{\varepsilon_i(Y'') > r} c_{Y''} [Y'']
    \qquad c_{Y''}\in\mathbb Q.
\end{equation*}
By induction with respect to $\dim V$ and $\varepsilon_i$, we get the
assertion.

\begin{ex}
    Let us give an example of the induction of irreducible
    components. Let us consider the $A_2$-quiver with
    $\dim V = (1,2)$, $\dim W = (1,2)$. We have an irreducible
    component $Y$ with $\varepsilon_2(Y) = 1$, which is obtained from
    $\La(V',W)$ with $\dim V' = (1,0)$, which is a single
    point. Nonzero maps in $Y$ are
    \begin{equation*}
        \begin{CD}
            V_1 = \CC @<B_{1,2}<< V_2 = \CC^2 \\ @V{J_1}VV @VV{J_2}V \\
            W_1 = \CC @. W_2 = \CC^2.
        \end{CD}
    \end{equation*}
    We can normalize $J_1 = 1$ by $\GL(V_1)$, then we see that $Y$ is
    $\mathbb CP^2$ as $B_{1,2}\oplus J_2$ defines $2$-dimensional
    subspace in $V_1 \oplus W_2 = \CC^3$.

    Let us consider $\varepsilon_1(Y)$. For generic $[B,I,J]\in Y$, we
    have $B_{1,2}\neq 0$, hence $\varepsilon_1(Y) = 0$. We add
    $1$-dimensional space at the vertex $1$, and consider the
    irreducible component $Y''$ of $\La(V'',W)$ with
    $\dim V'' = (2,2)$. Over $[B,I,J]\in Y$, it is given by a
    $1$-dimensional subspace in the middle cohomology of the complex
    \begin{equation*}
        \CC = V_1 \xrightarrow{0\oplus J_1} V_2\oplus W_1 = \CC^2\oplus \CC
       \xrightarrow{(B_{1,2},0)} V_1 = \CC.
    \end{equation*}
    If $B_{1,2} \neq 0$, the middle cohomology is $1$-dimensional,
    hence the choice of a $1$-dimensional subspace is unique.
    But note that there is a point $B_{1,2} = 0$ in $Y$. Then the
    middle cohomology group is $2$-dimensional, hence we have choices
    parametrized by $\mathbb CP^1$. This shows that $Y''$ is the
    blowup of $Y = \mathbb CP^2$ at the point $B_{1,2} = 0$. It also
    gives an example where $\dim\Hom((B,I,J),S_i)$ jumps at a special
    point in an irreducible component.
\end{ex}


\ifx\undefined\bysame 
\newcommand{\bysame}{\leavevmode\hbox to3em{\hrulefill}\,} 
\fi

\end{document}